\documentclass[10pt]{article}
\usepackage{geometry}
\usepackage{amsmath}
\usepackage{amssymb}

\usepackage{enumerate}

\usepackage{listings}
\usepackage{color}
\definecolor{codegreen}{rgb}{0,0.6,0}
\definecolor{codepurple}{rgb}{0.58,0,0.82}
\definecolor{codered}{rgb}{0.7,0,0}
\lstdefinestyle{mystyle}{ 
	commentstyle=\color{codegreen},
	numberstyle=\tiny,
	stringstyle=\color{codepurple},
	basicstyle=\ttfamily\footnotesize,
	breakatwhitespace=false,         
	breaklines=true,                 
	captionpos=b,                   
	keepspaces=true,
	keywordstyle=\color{blue},                                
	showspaces=false,                
	showstringspaces=false,
	showtabs=false,                  
	tabsize=2,
	numbers=left,
	stepnumber=1,
	emph=[1]{feval},emphstyle=[1]\color{black}
}
\usepackage{fancyhdr}
\usepackage[hidelinks]{hyperref} 

\usepackage{amsthm} 
\newtheorem{theorem}{Theorem}
\numberwithin{theorem}{section}
\newtheorem*{theorem*}{Theorem}

\newtheorem{lemma}[theorem]{Lemma}

\newtheorem{remark}[theorem]{Remark}

\newtheorem*{example*}{Example}

\usepackage{mathrsfs}

\usepackage{xcolor} 

\usepackage{caption}
\usepackage{subcaption}
\usepackage{graphicx}

\usepackage{algorithm}
\usepackage{algpseudocode} 

\usepackage{bbm}

\usepackage{bm}

\usepackage{mathtools}

\usepackage[toc,page]{appendix}

\usepackage{caption}
\usepackage{subcaption}

\title{Stable Hermite transforms via the Golub--Welsch algorithm}
\author{Marcus Webb\footnote{Corresponding author, \href{mailto:marcus.webb@manchester.ac.uk}{marcus.webb@manchester.ac.uk}} \\ Department of Mathematics\\ The University of Manchester
\\ \,\\
Georg Maierhofer \\ Department of Applied Mathematics and Theoretical Physics\\ University of Cambridge}
\date{\today}
\begin{document}

\maketitle

\begin{abstract}
We introduce an efficient and stable algorithm for transforms associated with expansions in Hermite functions interpolated at Hermite polynomial roots. The Hermite transform matrix can be factorised into a diagonal component and an orthogonal matrix, leading to a form which allows both the forward and inverse Hermite transforms to be computed stably. Our novel algorithm computes this factorisation based on the eigendecomposition of the Jacobi matrix associated with Hermite functions. Through numerical experiments, we demonstrate the stability and efficiency gains of this novel method over prior work. Numerical experiments show that the new approach matches or improves on the accuracy of existing stabilized methods, is substantially faster in practice, and enables reliable use of large Hermite expansions in downstream PDE computations. We also provide an open-source implementation, together with reference implementations of previous methods, to facilitate adoption by the community.
\end{abstract}
\paragraph{Key words:} Spectral Methods; Hermite Functions; Computational Quantum Mechanics
\paragraph{Mathematics Subject Classification (MSC2020):} 65M70; 65D20; 65D15

\section{Introduction}
In the numerical solution of certain PDEs, such as the semi-classical Schr\"odinger equation, Hermite spectral methods, in which the solution is expanded as a series in Hermite functions are a natural method used by many researchers. As we will show in this paper, the standard algorithms used in several papers, such as \cite{ThCaNe2009,gauckler2011convergence,bao2005fourth, Thalhammer2012}, encounter underflow or overflow in double precision arithmetic when the polynomial degree of the expansion is taken to be greater than $766$. Indeed, \cite{bao2009generalized} says ``...care must be taken when computing the weights for large $N$." In some settings extremely large values of $N$ are necessary to accurately resolve the solution. An example of this is the semi-classical Schr\"odinger equation, in which the ratio between the mass of an electron and the mass of an atomic nucleus leads to oscillatory solutions requiring a large number of Hermite modes to represent these oscillations (this follows from WKB analysis, cf.~\cite{jin2011mathematical}). In this paper we discuss numerically stable algorithms for Hermite spectral methods that avoid this numerical issue, one of which is new, the other is not widely known, but appeared in \cite[Appendix]{bunck2009fast}.

\subsection{Hermite transforms}

Hermite functions appear as fundamental special functions in a range of fields, including in quantum physics \cite{griffiths2018introduction}, statistical physics \cite{grohs2017tensor,WANG2019108815}, probability theory and statistics \cite{walter1977properties,izenman1991review}, and corresponding numerical methods \cite{Thalhammer2012,ThCaNe2009,gauckler2011convergence,amparoseguratemme2007,olver2020fast, iserles2023approximation}. They are defined as follows
\begin{align}\label{eqn:def_hermite_fns}
    \psi_n(x) = (2^n n! \sqrt{\pi})^{-\frac12} \mathrm{e}^{-\frac{x^2}{2}} \mathrm{H}_n(x),
\end{align}
where $\mathrm{H}_n(x)$ is the degree $n$ Hermite polynomial, defined by the recurrence $\mathrm{H}_0(x) = 1$, $\mathrm{H}_1(x) = 2x$,
\begin{align*}
    \mathrm{H}_{n+1}(x) = 2x\mathrm{H}_n(x) - 2n\mathrm{H}_{n-1}(x), \qquad n = 1,2,\ldots.
\end{align*}
The Hermite polynomials are orthogonal polynomials on $L^2(\mathbb{R},\mathrm{e}^{-x^2})$ and the Hermite functions $\{\psi_n\}_{n\in\mathbb{N}}$ form an orthonormal basis of $L^2(\mathbb{R})$ and satisfy the recurrence
\begin{equation}\label{eqn:hermiterec}
    \psi_{n+1}(x) = \sqrt{\frac{2}{n+1}}x\psi_n(x) - \sqrt{\frac{n}{n+1}}\psi_{n-1}(x).
\end{equation}
This paper concerns the expansion of functions in terms of this Hermite function basis:
\begin{equation}\label{eqn:finite_expansion}
    f(x) \approx f_N(x) = \sum_{n=0}^{N{-}1} c_n \psi_n(x).
\end{equation}

Let $x_0,\dots, x_{N{-}1}\in\mathbb{R}$ be the Gauss--Hermite quadrature nodes of degree $N$ (i.e.~the roots of $\psi_N$). We are solely concerned with the function $f_N \in \mathrm{span}\{\psi_0,\ldots,\psi_{N{-}1} \} = :\mathcal{H}_N$ that interpolates $f$ at the nodes.

If we are given a vector of coefficients $\mathbf{c} = (c_0,\ldots,c_{N{-}1})^T$, we can compute the vector of values $\mathbf{v} = (f_N(x_0),\ldots,f_N(x_{N{-}1}))^T$ by the matrix-vector multiplication $\mathbf{v} = T\mathbf{c}$, where ${T}\in\mathbb{R}^{N\times N}$ with
\begin{equation}\label{eqn:T}
    {T}_{ij}=\psi_{j}(x_i), \quad 0\leq i,j\leq N{-}1.
\end{equation}
This is called a forward discrete transform (or synthesis operation \cite{olver2020fast}). The backward discrete transform (or analysis operation) is $\mathbf{c} = T^{-1}\mathbf{v}$. This analysis operation can be used to take the values of $f$ at the nodes and compute the coefficients of the unique interpolant in $\mathcal{H}_N$.

The computational task considered in this paper is that of building the matrices $T$ and $T^{-1}$ for a given $N$. In tandem, these two matrices can be used to transform forwards and backwards between coefficients and values, performing the necessary operations in a Hermite spectral collocation method (see section \ref{sec:downstream}). We are interested only in algorithms requiring $\mathcal{O}(N^2)$ operations to perform this task and the novel aspects in this paper are in ways to ensure that $T$ and $T^{-1}$ are computed in a numerically stable manner.

In practice, the matrix $T$ is often computed directly using the recurrence in \eqref{eqn:hermiterec} (cf.~\cite{Thalhammer2012,ThCaNe2009,gauckler2011convergence,bao2005fourth, bao2009generalized}). In double precision this method encounters underflow for $N \geq 766$, which limits the applicability of the method. {In principle, \cite{leibon2008fast} introduced a fast algorithm for the assembly of $T$ in $\mathcal{O}(N\log(N))$ operations. However, unlike fast transforms for Jacobi polynomial expansions (cf. \cite{townsend2018fast, slevinsky2018use, olver2020fast}), this fast Hermite transform suffers from instabilities for $N \geq 64$ (see \cite[Table 2]{leibon2008fast}).}

This stability problem of computing $T$ was discussed by Bunck in \cite{bunck2009fast}, in response to which he introduced a stable recurrence relation for computing Hermite functions, which has, however, only received limited attention from the community. In recent years, this algorithm has been successfully used to design a stable Hermite transform in the Julia packages \verb|FastTransforms.jl| and \verb|FastGaussQuadrature.jl| \cite{FastTransforms,FastGaussQuadrature}. This stable approach has been also been successfully applied in the context of the computation of dispersive partial differential equations \cite{banicamaierhoferschratz26,carlesmaierhofer25} posed on unbounded domains.

\subsection{Contributions and paper structure}

In this paper we propose an alternative method for calculating $T$ and $T^{-1}$, based on a by-product of the Golub--Welsch algorithm for calculating Gaussian quadrature rules \cite{golub1969calculation}. We produced a simple MATLAB implementation of the new method (see Appendix \ref{app:matlab_code}), and found that in practice it is as accurate as Bunck's stabilized recurrence, and as fast as the direct method (see section \ref{sec:numerical_examples}).

The remainder of this manuscript is structured as follows. In Section~\ref{sec:hermite_transform} we define the transformation between function values and expansion coefficients more precisely and describe the stability issue that appears when using a direct transformation. This is followed, in Section~\ref{sec:Bunck_method} with a description of Bunck's original method from a modern perspective and, in Section~\ref{sec:Golub-Welsch_method} with a description of our new algorithm whose performance we test in several numerical examples in Section~\ref{sec:numerical_examples}. Finally, we provide a MATLAB implementation of the algorithm, which can be found in Appendix~\ref{app:matlab_code} and is also publicly available here: \url{https://github.com/marcusdavidwebb/StableHermiteTransforms}.

\section{Algorithms for the Hermite transform}\label{sec:hermite_transform}

As discussed in the introduction, we can build the Hermite transform matrix $T$ in $\mathcal{O}(N^2)$ operations by taking advantage of the three-term recurrence. However, for $|x| > 39$, $\mathrm{e}^{-x^2 / 2}$ underflows to zero, so when $N \geq 766$ the contribution of $\psi_0(x_0)$ is lost, leading to catastrophic instability that can be seen in practice in Section~\ref{sec:numerical_examples}. 

To avoid this underflow problem, one could consider scaling the rows and columns and later ``de-scaling'' them, which will be discussed in section \ref{sec:Bunck_method}.

To compute the inverse transform matrix $T^{-1}$, we can use the exactness property of Gauss--Hermite quadrature. Namely, if $w_0,\ldots,w_{N{-}1}$ are the quadrature weights, then
\begin{align}\label{eqn:GHexact}
    \sum_{k=0}^{N-1} w_k \mathrm{e}^{x_k^2} \psi_i(x_k) \psi_j(x_k) = \int_{-\infty}^\infty \psi_i(x) \psi_j(x) \,d x = \delta_{i,j},
\end{align}
because $\mathrm{e}^{x^2} \psi_i(x) \psi_j(x)$ is a polynomial of degree at most $2N-2$. This implies that
\begin{equation}\label{eqn:Tinv}
    T^{-1} = T^T W,
\end{equation}
where $W$ is a diagonal matrix with elements $W_{kk} = w_k \mathrm{e}^{x_k^2}$. However, a na\"ive calculation of the matrix $W$ encounters underflow and overflow in $w_k$ and $\mathrm{e}^{x_k^2}$ respectively whenever $N \geq 372$. 

The calculation of $W$ can be stabilized as follows. The weights have the formula \cite[(15.3.6)]{szego1939orthogonal}
\begin{equation}
    w_k = \frac{\pi^{1/2}2^{N+1}N!}{\mathrm{H}_{N}'(x_k)^2}.
\end{equation}
The relation $\mathrm{H}'_N = 2N \mathrm{H}_{N{-}1}$ \cite[(18.9.25)]{NIST:DLMF} and equation \eqref{eqn:def_hermite_fns} lead to
\begin{equation}\label{eqn:ghweights}
    w_k = \frac{\mathrm{e}^{-x_k^2}}{N \psi_{N{-}1}(x_k)^2}.
\end{equation}
Therefore, $W_{kk} = N^{-1}\psi_{N{-}1}(x_k)^{-2}$, which can be extracted directly from the final column of $T$.

Nonetheless, for large $N$, an alternative algorithm is required for both $T$ and $T^{-1}$ that avoids underflow or overflow.

\subsection{Bunck's stabilized recurrence}\label{sec:Bunck_method}

\begin{algorithm}[h!]\caption{Bunck's stabilized recurrence for building $T$ \\ \textbf{Input:} Gauss--Hermite nodes $\mathbf{x} \in \mathbb{R}^{N}$ \\ \textbf{Output:}  $T\in\mathbb{R}^{N\times N}$ in equation \eqref{eqn:T}.}\label{alg:HermiteTransformPlan}
\begin{algorithmic}[1]
\State $\mathbf{h}_0 = \pi^{-1/4} \mathbf{1}$
\State $\mathbf{h}_1=2^{1/2}\pi^{-1/4} \mathbf{x}$
\For{$j= 1,2,3,\ldots,N{-}2$}
\State $\mathbf{h}_{j+1} = \sqrt{\tfrac{2}{j+1}} \mathbf{x} \odot \mathbf{h}_{j} - \sqrt{\tfrac{j}{j+1}} \mathbf{h}_{j-1}$
\State Define $\mathbf{s}_j \in \mathbb{R}^{N}$ by \begin{align*}
		(\mathbf{s}_{j})_k=\begin{cases}
			0,&\text{ if}\ \left|\left(\mathbf{h}_{j}\right)_k\right|< 10,\\
			\log\left|\left(\mathbf{h}_{j}\right)_k\right| ,& \text{ otherwise.}
		\end{cases}
	\end{align*}
    \State $\mathbf{h}_{j+1} = \mathbf{h}_{j+1} \odot \exp(-\mathbf{s}_j)$
    \State $\mathbf{h}_j = \mathbf{h}_j \odot \exp(-\mathbf{s}_j)$
\EndFor
\For{$j=1,2,\ldots,N{-}1$}
\State $\mathbf{h}_j = \mathbf{h}_j \odot \exp\left(-\frac{1}{2} \mathbf{x} \odot \mathbf{x}+\sum_{i=1}^j \mathbf{s}_i \right)$
\EndFor
\State $T = [\mathbf{h}_0, \mathbf{h}_1,\ldots,\mathbf{h}_{N{-}1}] $
\end{algorithmic}
\end{algorithm}

 The following is a description of a method proposed by Bunck in \cite[Appendix]{bunck2009fast}, of scaling the elements of the matrix $T$ as they are computed, to avoid underflow or overflow, and then undo the scaling at the end. 
 
The first alternative proposed by Bunck, to avoid the underflow in the first column of $T$ from propagating through the remaining columns, is to instead generate the vectors $\mathbf{h}_j\in\mathbb{R}^N$, with $$\left(\mathbf{h}_j\right)_k = \psi_j(x_k)\mathrm{e}^{x_k^2/2}, \text{ for } k = 0,\ldots, N{-}1$$ starting from $\mathbf{h}_0 = \pi^{-1/4} \mathbf{1}$ and $\mathbf{h}_1 = 2^{1/2}\pi^{-1/4} \mathbf{x}$, proceeding with the same recurrence and then multiplying by $\exp(-x_k^2/2)$ at the end. However, this leads to overflow in the $\mathrm{h}_j$ vectors.

The approach that Bunck proposed was to compute vectors $\mathbf{h}_j$ so that
\begin{equation}\label{eqn:Bunckscaling}
    \left(\mathbf{h}_j\right)_k = \psi_j(x_k)\exp\left(\frac{x_k^2}{2} - \sum_{i=1}^j \left(\mathbf{s}_i\right)_k \right),
\end{equation}
where $(s_j)_k$ is chosen adaptively so that $(h_j)_k = \mathcal{O}(1)$. Then at the end of the calculation, $\psi_j(x_k)$ can be extracted from equation \eqref{eqn:Bunckscaling}. Algorithm \ref{alg:HermiteTransformPlan} demonstrates this process explicitly.

\subsection{Orthogonal decomposition of the transform matrix}

Equation \eqref{eqn:Tinv} indicates that there is a hidden orthogonal matrix that could be exploited. The following lemma identifies this hidden matrix, to be exploited in section \ref{sec:Golub-Welsch_method}.

\begin{lemma}\label{lem:orth}
The Hermite transform matrix $T \in \mathbb{R}^{N\times N}$, with the nodes $\mathbf{x}$ taken to be the Gauss--Hermite quadrature nodes, satisfies
\begin{equation}
    T = DQ^T, \qquad T^{-1} = Q D^{-1},
\end{equation}
where $Q \in \mathbb{R}^{N\times N}$ is orthogonal and $D \in \mathbb{R}^{N\times N}$ is diagonal, with
\begin{align*}
    {Q}_{kj} = \frac{\psi_k(x_j)}{\sqrt{N} \left|\psi_{N{-}1}(x_j)\right|}, \qquad {D}_{jj} = \sqrt{N}\left|\psi_{N{-}1}(x_j)\right|.
\end{align*}
\end{lemma}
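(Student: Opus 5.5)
The plan is to verify the factorisation $T = DQ^T$ entrywise, then show that $Q$ is orthogonal using the discrete orthogonality relation \eqref{eqn:GHexact} together with the weight formula \eqref{eqn:ghweights}. The inverse formula then follows immediately.

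First, the factorisation. Since $T_{ij} = \psi_j(x_i)$, we have
\[
(DQ^T)_{ij} = D_{ii}\,Q_{ji} = \sqrt{N}\,|\psi_{N-1}(x_i)| \cdot \frac{\psi_j(x_i)}{\sqrt{N}\,|\psi_{N-1}(x_i)|} = \psi_j(x_i) = T_{ij}.
\]
This requires $D$ to be well defined and invertible, i.e. $\psi_{N-1}(x_i)\neq 0$ at every node. That holds because the zeros of $\mathrm{H}_{N-1}$ and $\mathrm{H}_N$ interlace strictly; equivalently, $w_i>0$ and \eqref{eqn:ghweights} would be undefined otherwise.

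Second, orthogonality of $Q$, which is the substantive step. Compute
\[
(QQ^T)_{ij} = \sum_{k=0}^{N-1} \frac{\psi_i(x_k)\psi_j(x_k)}{N\,\psi_{N-1}(x_k)^2}.
\]
By \eqref{eqn:ghweights}, $\frac{1}{N\psi_{N-1}(x_k)^2} = w_k \mathrm{e}^{x_k^2} = W_{kk}$. The sum is therefore exactly the left-hand side of \eqref{eqn:GHexact}, which equals $\delta_{i,j}$. Hence $QQ^T = I$, and since $Q$ is square it is orthogonal, so $Q^TQ=I$ as well.

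Finally, invertibility gives $T^{-1} = (Q^T)^{-1}D^{-1} = QD^{-1}$. As a consistency check, this matches \eqref{eqn:Tinv}: $T^TW = QDW = QD^{-1}$, since $DW = D^{-1}$ by $W_{kk} = D_{kk}^{-2}$.

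I expect no serious obstacle. The only points needing care are that the weight identity \eqref{eqn:ghweights} turns the weighted sum into exactly the $Q$-inner product, and that $\psi_{N-1}(x_k)\neq0$ holds at every node.
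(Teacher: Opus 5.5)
Your proof is correct and takes essentially the paper's route: the paper also obtains orthogonality from $T^TWT=I$ (i.e.\ \eqref{eqn:Tinv}, which comes from \eqref{eqn:GHexact}) together with $W_{kk}=N^{-1}\psi_{N-1}(x_k)^{-2}$ from \eqref{eqn:ghweights}, and concludes that $W^{1/2}T$ is orthogonal. That is the matrix form of your entrywise computation $QQ^T=I$. Your version is slightly more careful, since it makes explicit that $W^{1/2}T$ is in fact $Q^T$ (the paper writes $Q$) and that $\psi_{N-1}(x_k)\neq 0$ at every node, which the paper leaves implicit.
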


\begin{proof}
Clearly $T = DQ^T$ by the definition of $D$ and $Q$. What remains to show is that $Q$ is orthogonal. Consider equation \eqref{eqn:Tinv}. It says that ${T}^T {W} {T} = {I}$, where $W$ is a diagonal matrix with elements $W_{kk} = N^{-1}\psi_{N{-}1}(x_k)^{-2}$ by equation \eqref{eqn:ghweights}. This implies that $W^{1/2}T = Q$ is orthogonal.
\end{proof}

\begin{figure}[h!]
\begin{subfigure}{0.495\textwidth}
		\centering
		\includegraphics[width=0.8\linewidth]{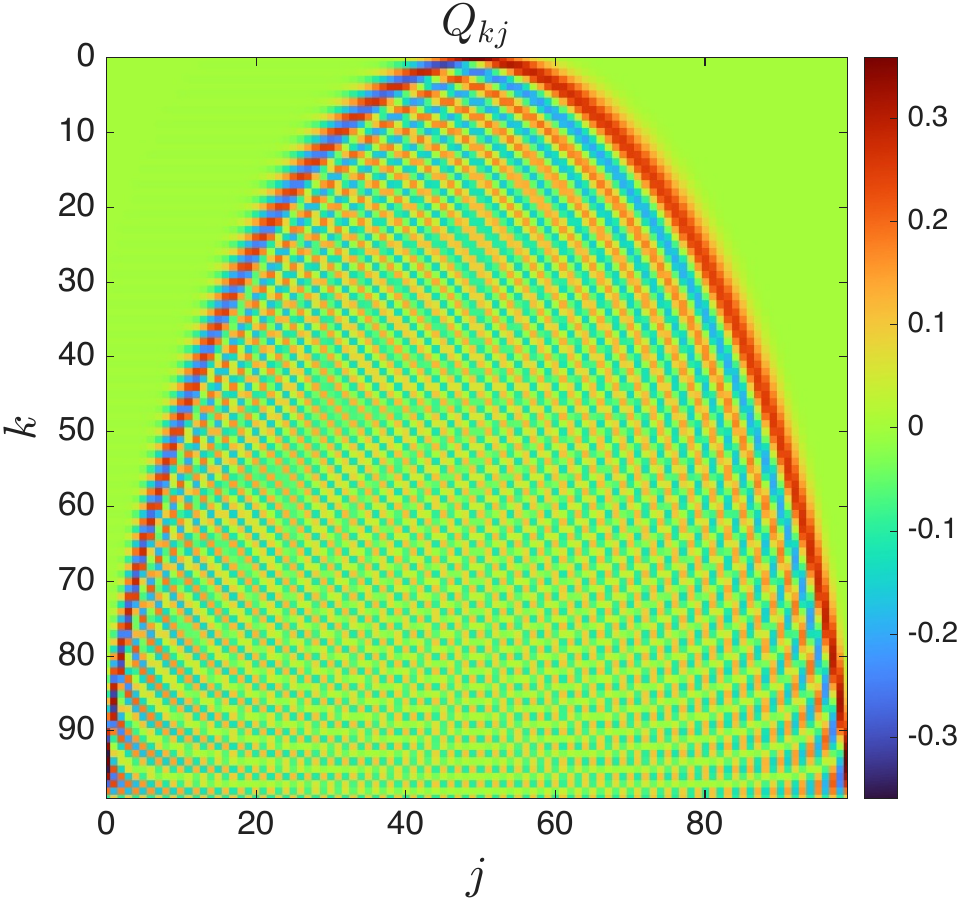}
        \end{subfigure}
        \begin{subfigure}{0.495\textwidth}
		\centering
        \includegraphics[width=0.7\linewidth]{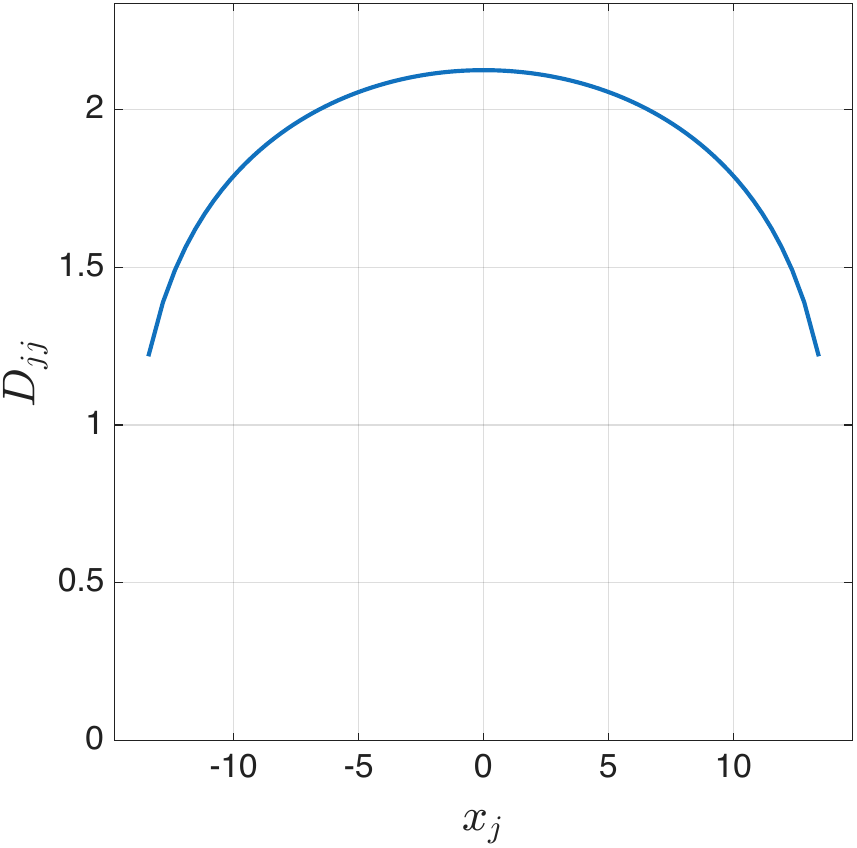}
        \end{subfigure}
    \caption{Depiction of the matrix $Q$ and the diagonal of $D$ for $N = 100$.}
    \label{fig:depictQandd}
\end{figure}

\subsection{New approach via the Golub--Welsch algorithm}\label{sec:Golub-Welsch_method}

The Golub--Welsch algorithm computes Gaussian quadrature nodes and weights via the eigendecomposition of the Jacobi matrix associated with the three term recurrence \cite{golub1969calculation}. In this section, we begin by describing the mathematical foundation of this approach and before providing a summary of the steps taken in Algorithm~\ref{alg:GolubWelschHermiteTransform}. The starting point of the algorithm is the Jacobi matrix $J\in\mathbb{R}^{N\times N}$ associated with the Hermite polynomials
\begin{align}
    J_{i,j} =
\begin{cases}
\sqrt{\dfrac{i}{2}}, & j = i+1, \\
\sqrt{\dfrac{j}{2}}, & i = j+1, \\
0, & \text{otherwise},
\end{cases}
\quad \text{for } i,j = 0,1,\dots,N{-}1. \label{eqn:ghJacobimatrix}
\end{align}

\begin{lemma}\label{lem:jacobi}
    The Jacobi matrix $J\in\mathbb{R}^{N\times N}$ in equation \eqref{eqn:ghJacobimatrix} has eigendecomposition
    $$
    J = Q\Lambda Q^T,
    $$
    where $Q$ is as in Lemma \ref{lem:orth} and $\Lambda = \mathrm{diag}(\mathbf{x})$ where $\mathbf{x}$ is the vector of Gauss--Hermite quadrature nodes.
    \begin{proof}
The $k$th column of the relationship $JQ = Q\Lambda$ follows from equation \eqref{eqn:hermiterec} evaluated at $x = x_k$ and divided by $\sqrt{N}|\psi_{N{-}1}(x_k)|$. The orthogonality of $Q$ proved in Lemma \ref{lem:orth} yields $J = JQQ^T = Q\Lambda Q^T$.
    \end{proof}
\end{lemma}

Lemma \ref{lem:jacobi} shows us that the matrix $Q$ in the orthogonal factorisations of $T$ and $T^{-1}$ can be computed in $\mathcal{O}(N^2)$ operations by calling a tridiagonal eigensolver. To complete the computation, we must compute the diagonal elements of the matrix $D$, which requires computation of $\psi_{N{-}1}(\mathbf{x})$.

While the standard Gauss--Hermite weights $w_k$ can be extracted directly from the first row of the matrix $Q$ \cite{golub1969calculation}, we believe that the weights $D_{jj}$ cannot be extracted directly from the $Q$ or $\Lambda$ --- but we would be glad to be proved wrong. So, we propose to calculate $\psi_{N-1}(\mathbf{x})$ by Clenshaw's algorithm \cite{clenshaw1955note}, \cite[§2.7]{olver2020fast} applied to the expansion \eqref{eqn:finite_expansion} with $\mathbf{c} = (0,0,\ldots,1)^T$ for small $N$ (because this algorithm experiences underflow or overflow for the same values of $N$ as the direct method) and use an asymptotic expansion for the Hermite functions for large $N$.

As discussed in \cite{townsend2016fast}, following \cite[12.7]{NIST:DLMF},
\begin{equation}
    \psi_n(x) = (n!\sqrt{\pi})^{-\tfrac12} U\left( -\frac12\mu^2, \mu t \sqrt{2} \right),
\end{equation}
where $U$ is the parabolic cylinder function, $\mu = \sqrt{2n+1}$ and $t = x/\mu$. For $-1 < t \leq 1$ and $\mu \to +\infty$, we have the asymptotic expansion \cite[12.10.35]{NIST:DLMF},
$$
U\!\left(-\tfrac{1}{2}\mu^2,\ \mu t\sqrt{2}\right)
\sim 2\pi^{1/2}\mu^{1/3} g(\mu)\phi(\zeta)
\left(
\operatorname{Ai}(\mu^{4/3}\zeta)\sum_{s=0}^{\infty}\frac{A_s(\zeta)}{\mu^{4s}}
+
\frac{\operatorname{Ai}'(\mu^{4/3}\zeta)}{\mu^{8/3}}
\sum_{s=0}^{\infty}\frac{B_s(\zeta)}{\mu^{4s}}
\right),
$$
where $\zeta$  satisfies
$\frac{2}{3}(-\zeta)^{3/2}=
\frac{1}{2}\cos^{-1} t - \frac{1}{2}t\sqrt{1-t^2}$,
$\phi(\zeta) = \left(\frac{\zeta}{t^2 - 1}\right)^{1/4}$,  the functions $A_0,A_1,\ldots$, $B_0, B_1,\ldots$ are described in \cite[12.10.42]{NIST:DLMF}, and
$$
g(\mu) = h(\mu)\left(1 + \sum_{s=1}^{\infty} \frac{\phi_s}{\big((1/2)\mu^2\big)^s} \right), \qquad h(\mu) = 2^{-(1/4)\mu^2 - 1/4} \, \mathrm{e}^{-(1/4)\mu^2} \, \mu^{(1/2)\mu^2 - 1/2},
$$
where the coefficients $\phi_s$ are defined by
$$
\Gamma\!\left(\frac{1}{2} + z\right)
\sim \sqrt{2\pi}\, \mathrm{e}^{-z} z^z \sum_{s=0}^{\infty} \frac{\phi_s}{z^s}.
$$
We use this expansion to evaluate $D_{kk}$ for $N \geq 200$ and for $\left\lfloor \frac{N}{2}\right\rfloor \leq k \leq N-1$. These values of $k$ correspond to $x_k \geq 0$. Values of $k$ corresponding to negative $x_k$ are computed by symmetry. In Figure \ref{fig:placeholder}, we provide evidence that the use of this asymptotic expansion to compute $T$ is accurate for $N \geq 200$, but that for $N < 200$ we should use Clenshaw's algorithm.

\begin{figure}[h!]
    \centering
    \begin{subfigure}{0.495\textwidth}
		\centering
		\includegraphics[width=0.99\linewidth]{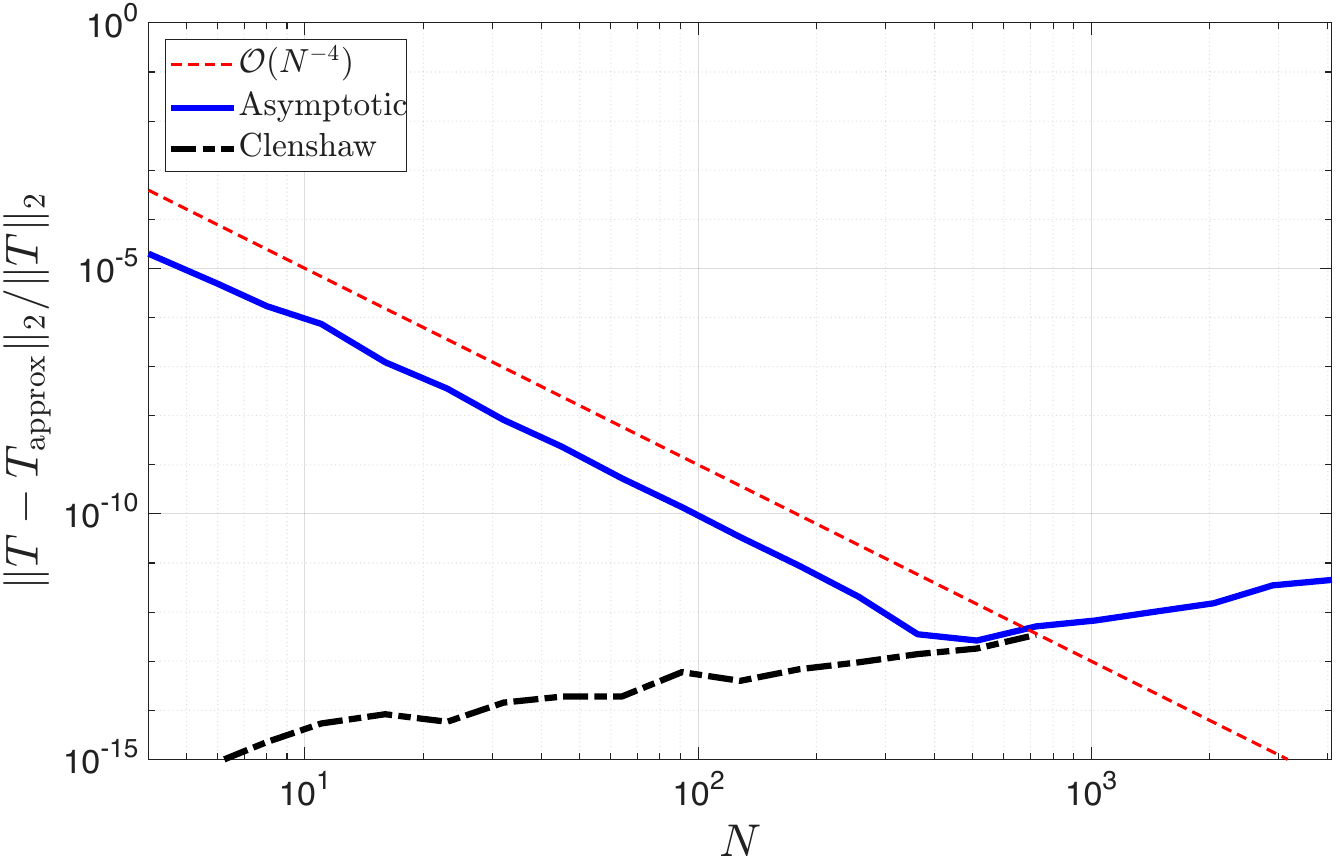}
        \caption{Error in $T$.}
        \end{subfigure}
        \begin{subfigure}{0.495\textwidth}
		\centering
        \includegraphics[width=0.99\linewidth]{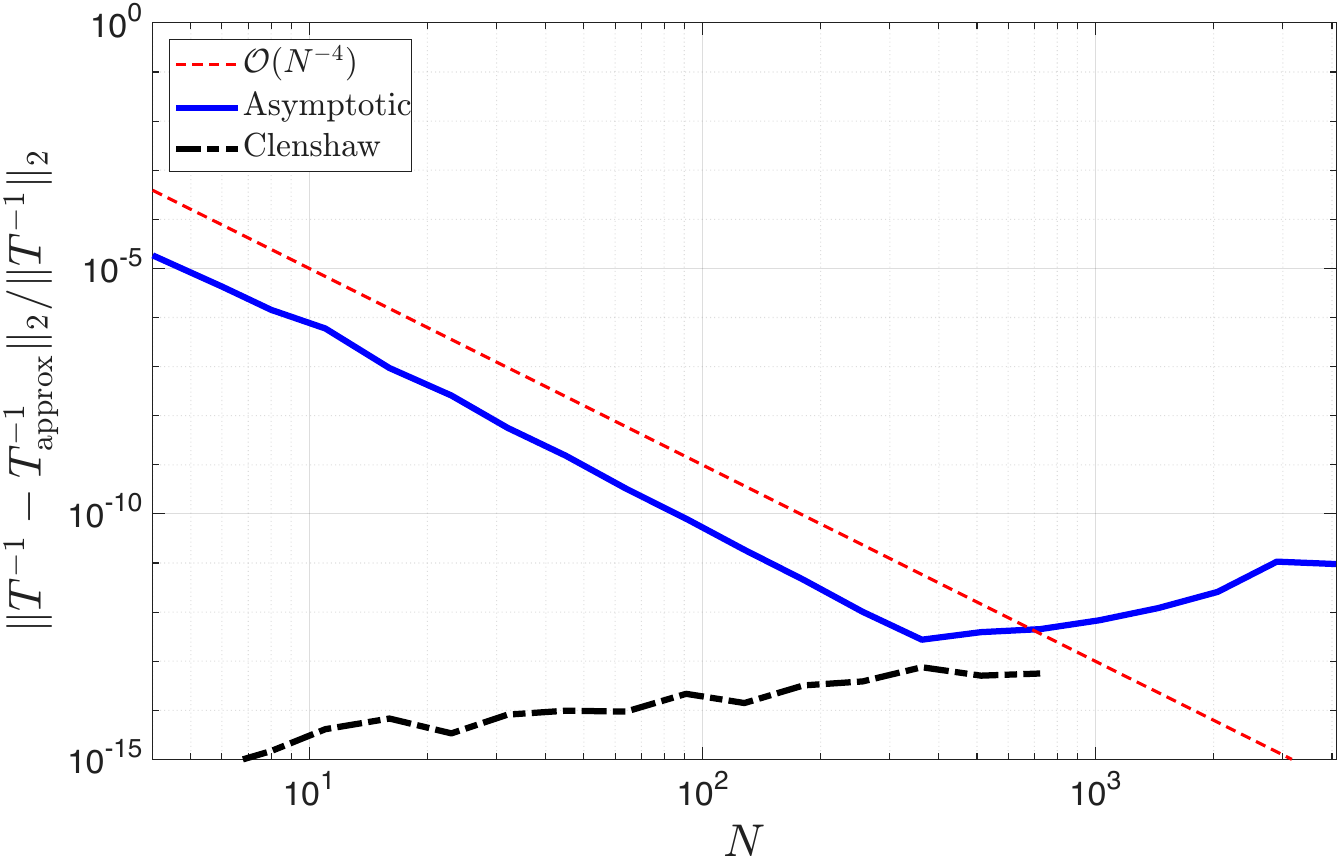}
        \caption{Error in $T^{-1}$.}
        \end{subfigure}
		
    \caption{Error in Algorithm~\ref{alg:GolubWelschHermiteTransform} using Clenshaw's algorithm vs the asymptotic expression described above (cf.~\cite[Fig.~2]{townsend2016fast}). We have excluded errors that are larger than $10^{-1}$. We speculate that the slowly growing error for large $N$ (in the blue lines) is due to the error in the computation of the Airy function and its derivative.}
    \label{fig:placeholder}
\end{figure}
\begin{algorithm}[h!]\caption{Golub--Welsch method for building the Hermite transform factors \(D\) and \(Q\) \\ \textbf{Input:} Integer \(N \ge 1\) \\ \textbf{Output:} \(Q\in\mathbb{R}^{N\times N}\), \(\mathbf{d}\in\mathbb{R}^{N}\) and $\mathbf{x}$ such that \(T = \mathrm{diag}(\mathbf{d}) Q^T\) and $\mathbf{x}$ are the Gauss--Hermite quadrature nodes.}\label{alg:GolubWelschHermiteTransform}
\begin{algorithmic}[1]
\State Assemble the Jacobi matrix \(J\in\mathbb{R}^{N\times N}\) using \eqref{eqn:ghJacobimatrix}
\State Compute the eigendecomposition
$J = Q\, \mathrm{diag}(\mathbf{x})\, Q^T$ (such that $\mathbf{x}$ is sorted)
\For{$j=0,1,2,\ldots,N{-}1$}
\State \(Q_{:,j} = Q_{:,j}\,\mathrm{sign}(Q_{N,j})(-1)^{N+j}\)
\EndFor
\State Compute \(\mathbf{d}\in\mathbb{R}^{N}\) using
\[
(\mathbf{d})_j = \sqrt{N}\,\big|\psi_{N-1}(x_{j})\big|,
\qquad j=0,1,2,\ldots,N{-}1,
\]
where \(\psi_{N-1}\) is evaluated as described in the text above.
\end{algorithmic}
\end{algorithm}

\begin{remark}
    Note that many algorithms for computing the eigendecomposition of a symmetric tridiagonal matrix require $\mathcal{O}(N^3)$ operations in the worst case \cite{demmel2008performance}; however, the Algorithm of Multiple Relatively Robust Representations (MRRR) \cite{dhillon2005glued} in LAPACK (\texttt{STEGR}) requires $\mathcal{O}(N^2)$ operations. This appears to be the algorithm used by MATLAB in our codes.
\end{remark}

\section{Numerical examples}\label{sec:numerical_examples}
In this section we provide a few numerical examples comparing the performance of the aforementioned three algorithms:
\begin{itemize}
    \item ``Direct'': the direct assembly of the Hermite transform matrices using the recurrence \eqref{eqn:hermiterec};
    \item ``Bunck'': the assembly of $\mathbf{d},Q$ using the stable algorithm described in section~\ref{sec:Bunck_method};
    \item ``Golub--Welsch'': our novel method based on the Golub--Welsch algorithm introduced in section~\ref{sec:Golub-Welsch_method}.
\end{itemize}

\subsection{Performance metrics of the stable Hermite transforms}
To begin with we evaluate the assembly time for various sizes of the Hermite transform (see Figure~\ref{fig:assembly_time}). We observe that our novel algorithm achieves significant speed-up over Bunck's method and in particular is comparable in cost than the Direct method.
\begin{figure}[h!]
\centering
		\centering
		\includegraphics[width=0.49\textwidth]{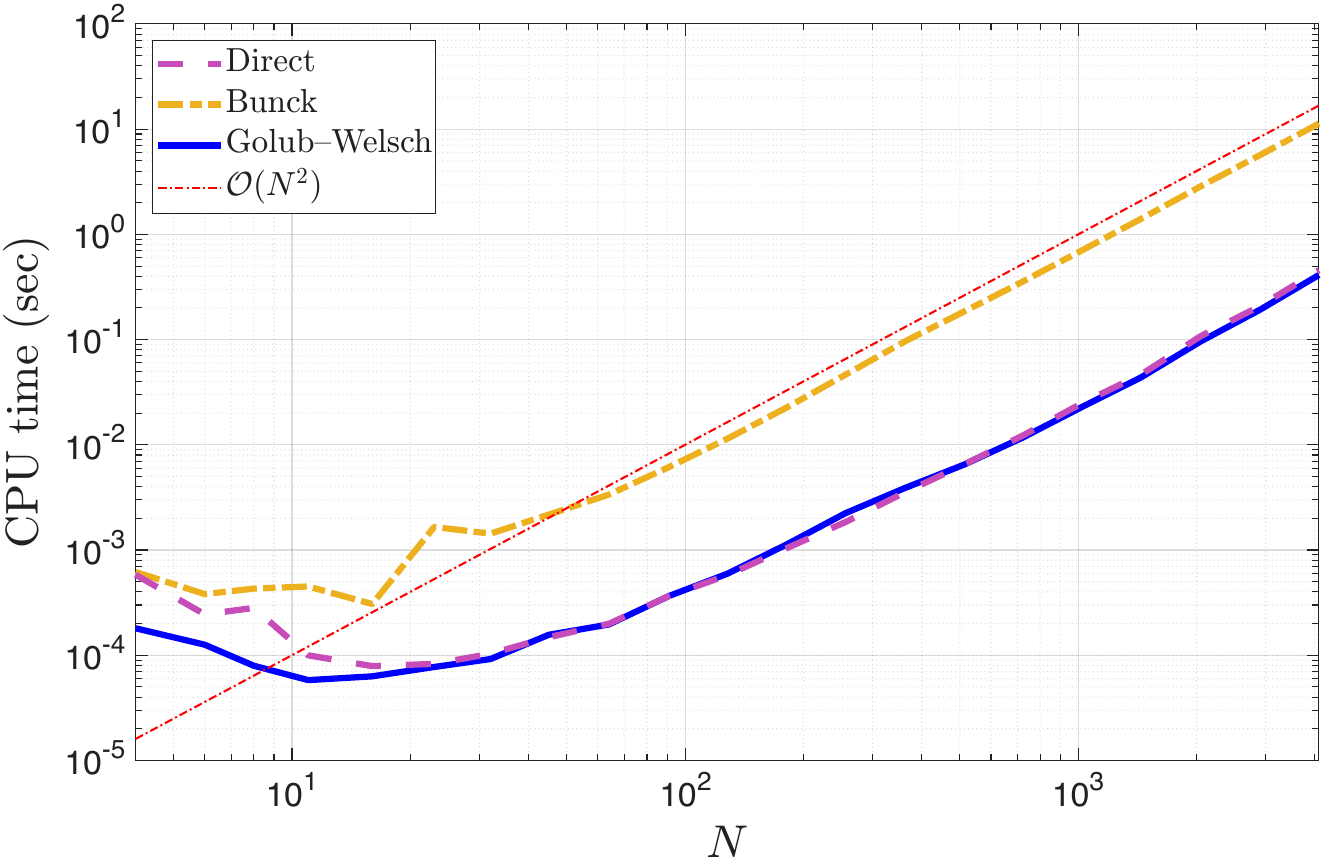}
  \caption{Assembly time for the matrix $T$.}
  \label{fig:assembly_time}
\end{figure}
Next we consider the error in the approximation of the transform matrix, $T$, and its inverse, $T^{-1}$. For this we compute reference values for the corresponding matrices using a high-precision Julia implementation of Bunck's algorithm (accurate to $10^{-34}$). As anticipated we observe the clear instability in the Direct method appearing both in $T$ and $T^{-1}$ {(around $N\approx 766$)} while both Bunck's method and the new Golub--Welsch-based algorithms are accurate and stable for large values of $N$. The important observation in this experiment is that the behaviour of our novel approach (``Golub--Welsch'') does not change as we switch to the asymptotic expression for the $\psi_N$ when $N>200$.

\begin{figure}[h!]
	\begin{subfigure}{0.495\textwidth}
		\centering
		\includegraphics[width=0.92\textwidth]{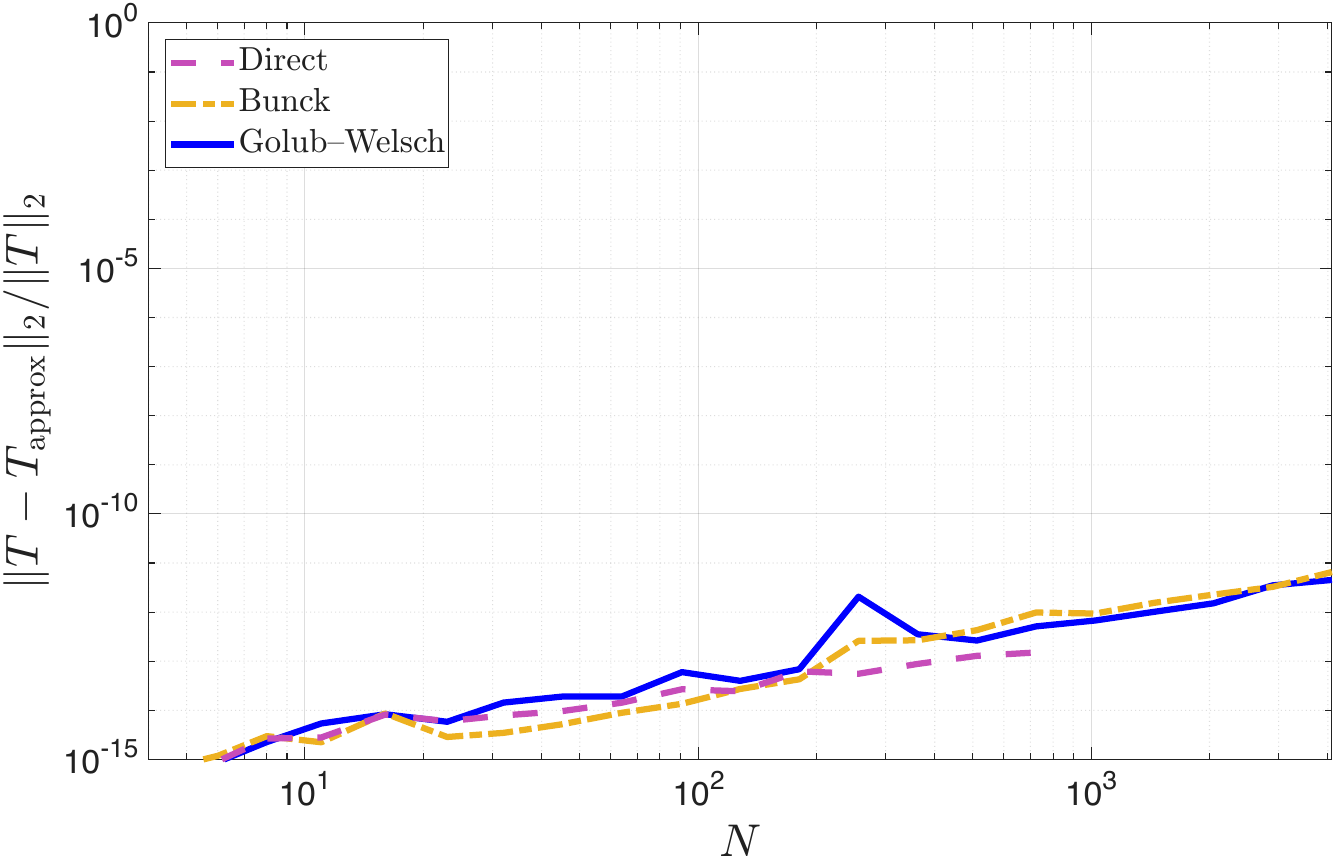}
  \caption{Error in the approximation of $T$.}
	\end{subfigure}
    \begin{subfigure}{0.495\textwidth}
		\centering
		\includegraphics[width=0.92\textwidth]{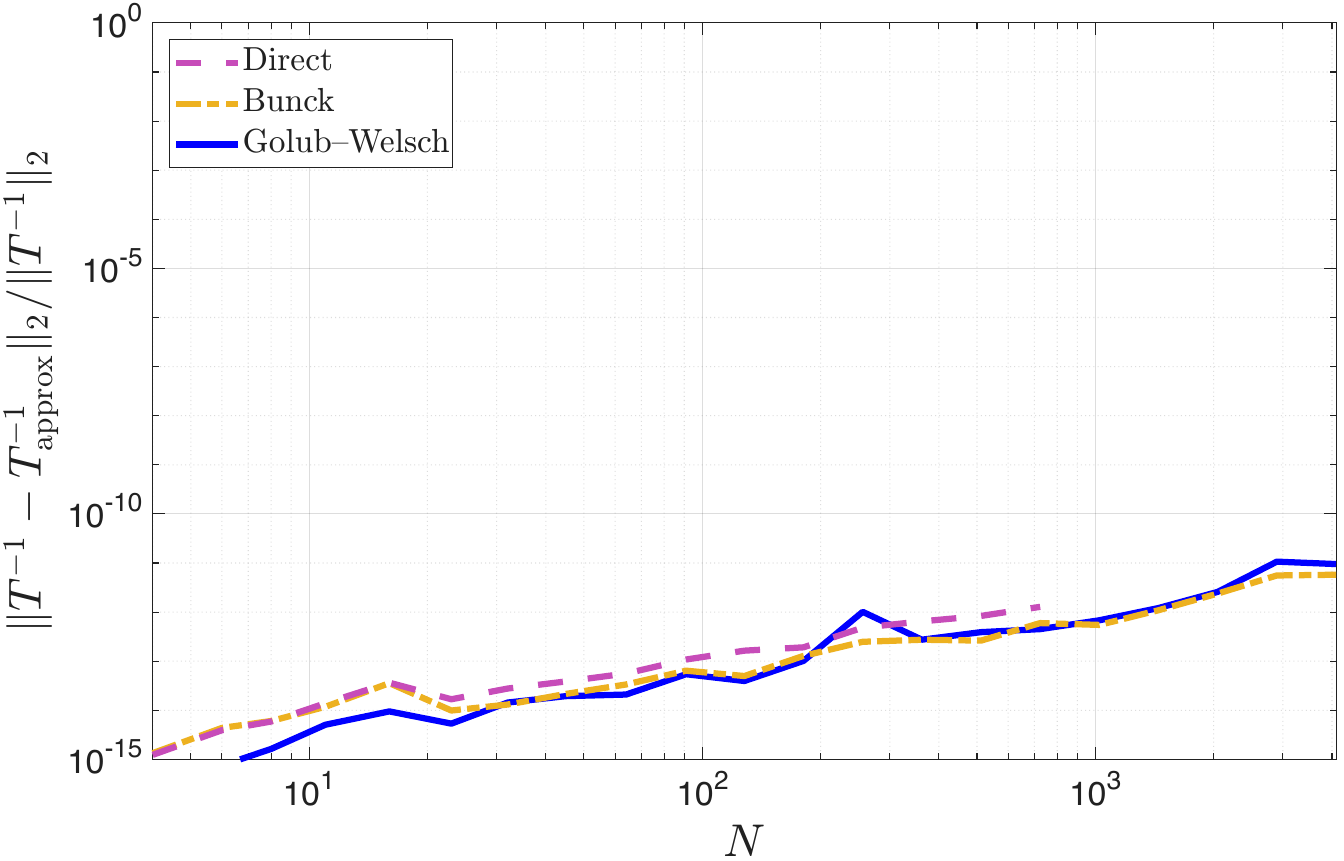}
  \caption{Error in the approximation of $T^{-1}$.}
	\end{subfigure}
	\caption{Error in the transform matrices ($2$-norm). We have excluded errors that are larger than $10^{-1}$.}
	\label{fig:performance_metrics}
\end{figure}


In summary, the performance metrics show that the new approach provides a highly efficient and stable way for assembling the transform matrix, outperforming both Bunck's and the Direct method in these respective metrics.

\subsection{Downstream task: the Gross--Pitaevskii equation}\label{sec:downstream}
To demonstrate the reliable performance of our novel algorithm for computing the Hermite transform we apply it to a downstream task, namely the numerical approximation of the Gross--Pitaevskii equation modelling Bose--Einstein condensation at temperatures below critical condensation \cite{bao2003numerical,gross1961structure,pitaevskii1961vortex} which is given by
\begin{align}\label{eqn:Gross-Pitaevskii_equation}
i \partial_t u(x,t)
=
\left(
-\frac{1}{2}\partial_{xx}
+\frac{1}{2}x^2
+\beta |u(x,t)|^2
\right)u(x,t),
\qquad x\in\mathbb{R}, \; t>0.
\end{align}
In the present example we choose a spatial discretisation based on the Hermite transform, noting that in particular the action of $-\frac{1}{2}\partial_{xx}
+\frac{1}{2}x^2$ on this basis is diagonal:
\begin{align*}
    \left(-\frac{1}{2}\partial_{xx}+\frac{1}{2}x^2\right)\psi_n(x)
    = \left(n+\frac{1}{2}\right)\psi_n(x).
\end{align*}
This makes the Hermite basis a natural starting point for a splitting method based on the two subproblems:
\begin{align*}
    \partial_t u^{(1)} &= -i\left(-\tfrac12\partial_{xx}+\tfrac12 x^2\right)u^{(1)}, \\
    \partial_t u^{(2)} &= -i\beta |u^{(2)}|^2u^{(2)},
\end{align*}
which have exact solutions
\begin{align}\label{eqn:exact_sln1}
    u^{(1)}(t+\tau)
    &= \exp\!\left(-i\tau\left(-\tfrac12\partial_{xx}+\tfrac12 x^2\right)\right)u^{(1)}(t), \\\label{eqn:exact_sln2}
    u^{(2)}(t+\tau)
    &= \exp\!\left(-i\beta\tau |u^{(2)}(t)|^2\right)u^{(2)}(t),
\end{align}
respectively, for a time step $\tau>0$, that can be computed using diagonal operations in Hermite coefficient and physical space. In Figure~\ref{fig:gp_equation} we compare the performance of two approaches for the spatial discretisation (using $N$ Hermite modes), the direct method (``Direct'') and our new Golub--Welsch-based approach (``Golub--Welsch'') from Section~\ref{sec:Golub-Welsch_method}. The initial condition used in the simulation is $u_0(x)=\sqrt{8}\exp(-(x+25)^2/8)\exp(ix/2)$ and our time step is $\tau=10^{-3}$. We observe that in downstream tasks the performance observations from the previous section translate into practice: the direct method leads to instability {around $N\approx 766$ with associated breakdown in the simulation of the Gross--Pitaevskii equation}. In contrast, our novel approach is stable and remains reliable even for large numbers of spatial modes. Note, we observe that even in this simple example a large number of Hermite modes is required for accurate spatial resolution, justifying why using more than $N>766$ modes is reasonable for practical applications.
\begin{figure}[h!]\centering
	\begin{subfigure}{0.5\textwidth}
		\includegraphics[width=0.99\textwidth]{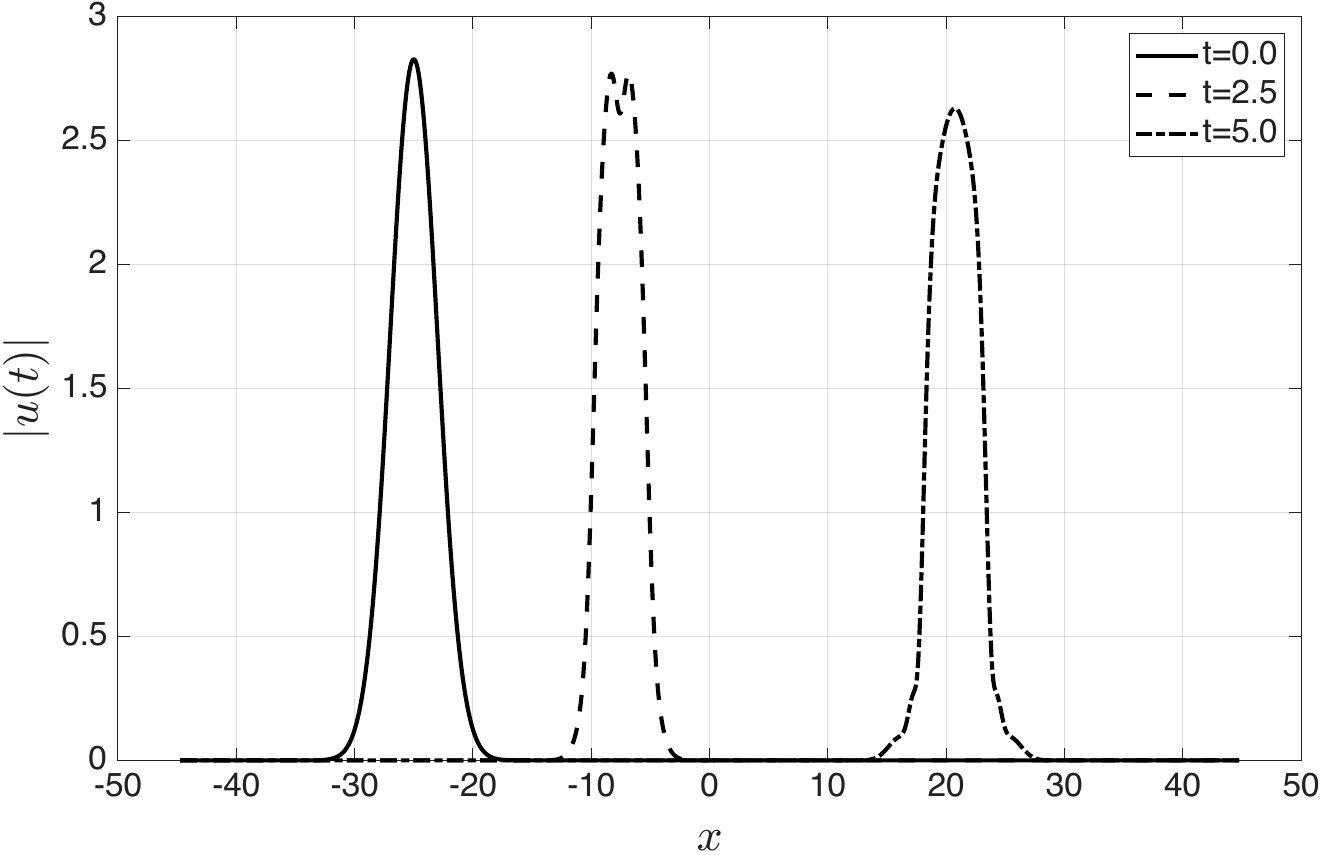}
		\caption{The solution field at $t=0,2.5,5$.}
	\end{subfigure}\begin{subfigure}{0.5\textwidth}
		\includegraphics[width=0.99\textwidth]{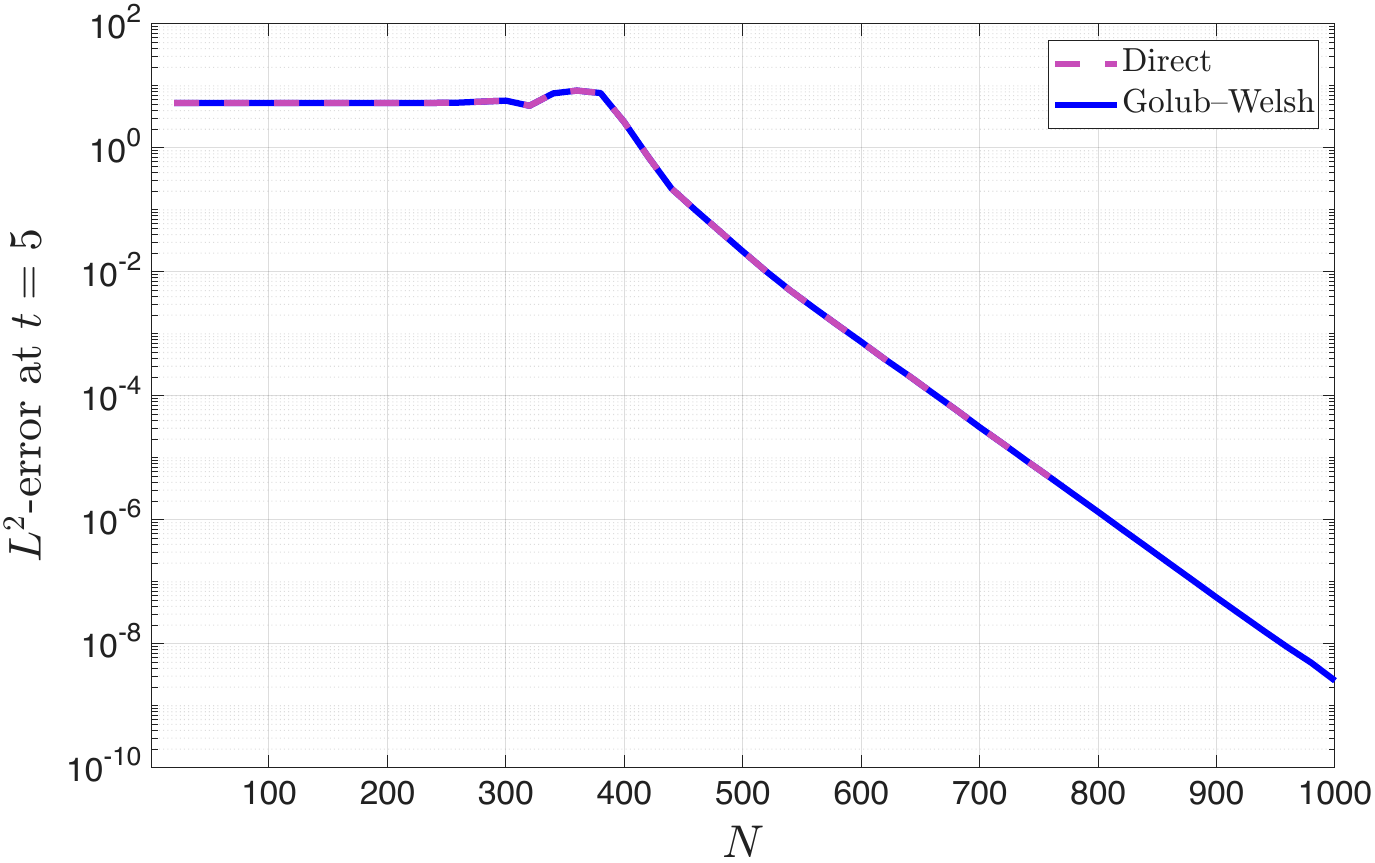}
		\caption{Approximation error at $t=5$.}
        \label{fig:approx_error_gp}
	\end{subfigure}
	\caption{Performance of the Hermite spatial discretisation on the Gross--Pitaevskii equation \eqref{eqn:Gross-Pitaevskii_equation}.}
	\label{fig:gp_equation}
\end{figure}

\section{Conclusions}

In this manuscript we introduced a novel algorithm for the stable computation of Hermite transforms by exploiting the factorisation of the transform matrix in the form $T = DQ^T$, where $Q$ is orthogonal and $D$ is diagonal. We compute these factors via the eigendecomposition of the associated Hermite Jacobi matrix. This perspective connects the computation of Hermite transforms directly to the classical Golub--Welsch framework for Gaussian quadrature and provides a conceptually simple and computationally efficient route to assembling the transform. In addition, we provide a modern presentation of Bunck's algorithm whose work \cite{bunck2009fast} has, until recently, perhaps unjustly received only limited attention in the community.

A certain advantage of the Golub--Welsch approach to computing $T$ and $T^{-1}$ is that we compute the Gauss--Hermite quadrature nodes $\{x_k \}_{k=0}^{N-1}$ for free, whereas the other approaches require these nodes to be precomputed (possibly using the Golub--Welsch algorithm!). We also obtain the Gauss--Hermite quadrature weights (almost) for free, by $w_k = \mathrm{e}^{-x_k^2}/D_{k,k}^2$ (by equation \eqref{eqn:ghweights}) or from $Q$ in the same fashion as the Golub--Welsch algorithm.

Our numerical experiments demonstrate that the proposed approach yields reliable accuracy and improved performance compared with constructing the Hermite transform matrix directly. Moreover, the resulting transforms are sufficiently stable for use in downstream scientific computing tasks, such as the simulation of partial differential equations posed on unbounded domains.

We hope that the community will find the open-source implementations of our algorithm useful in their work (available at \url{https://github.com/marcusdavidwebb/StableHermiteTransforms}).

\section*{Acknowledgements}

The authors would like to thank Sheehan Olver and Mika\"el Slevinsky for interesting discussions related to these algorithms. The authors gratefully acknowledge financial support from the University of Manchester and the University of Cambridge which has facilitated mutual visits that led to this work. MW thanks the Polish National Science Centre (SONATA-BIS-9), project no. 2019/34/E/ST1/00390, for the funding that supported some of this research.

\bibliographystyle{plain}
\bibliography{refs}

@ARTICLE{bunck2009fast,
  title     = "A fast algorithm for evaluation of normalized {H}ermite functions",
  author    = "Bunck, Benjamin F",
  journal   = "BIT Numer. Math.",
  publisher = "Springer Science and Business Media LLC",
  volume    =  49,
  number    =  2,
  pages     = "281--295",
  year      =  2009,
  language  = "en"
}

@article{golub1969calculation,
  title={Calculation of {G}auss quadrature rules},
  author={Golub, Gene H and Welsch, John H},
  journal={Math. Comput.},
  volume={23},
  number={106},
  pages={221--230},
  year={1969}
}

@misc{FastTransforms,
  author       = {R. M. Slevinsky and S. Olver and others},
  title        = {{FastTransforms.jl}},
  howpublished = {\url{https://github.com/JuliaApproximation/FastTransforms.jl}},
  note         = {Version 0.17.0},
  year         = {2025}
}

@misc{FastGaussQuadrature,
  author       = {A. Townsend and S. Olver and others},
  title        = {{FastGaussQuadrature.jl}},
  howpublished = {\url{https://github.com/JuliaApproximation/FastGaussQuadrature.jl}},
  note         = {Version 1.0.2},
  year         = {2024}
}

@article{iserles2023approximation,
  title={Approximation of wave packets on the real line},
  author={Iserles, Arieh and Luong, Karen and Webb, Marcus},
  journal={Constr. Approx.},
  volume={58},
  number={1},
  pages={199--250},
  year={2023},
  publisher={Springer}
}

@article{WANG2019108815,
title = {{Approximation of the {B}oltzmann collision operator based on Hermite spectral method}},
journal = {J. Comput. Phys.},
volume = {397},
pages = {108815},
year = {2019},
author = {Yanli Wang and Zhenning Cai},
}

@article{grohs2017tensor,
  title={Tensor-product discretization for the spatially inhomogeneous and transient {B}oltzmann equation in two dimensions},
  author={Grohs, Philipp and Hiptmair, Ralf and Pintarelli, Simon},
  journal={SMAI J. Comput. Math.},
  volume={3},
  pages={219--248},
  year={2017}
}

@article{townsend2016fast,
  title={{Fast computation of {G}auss quadrature nodes and weights on the whole real line}},
  author={Townsend, Alex and Trogdon, Thomas and Olver, Sheehan},
  journal={IMA J. Numer. Anal.},
  volume={36},
  number={1},
  pages={337--358},
  year={2016},
  publisher={Oxford University Press}
}

@article{olver2020fast,
  title={Fast algorithms using orthogonal polynomials},
  author={Olver, Sheehan and Slevinsky, Richard Mika{\"e}l and Townsend, Alex},
  journal={Acta Numer.},
  volume={29},
  pages={573--699},
  year={2020},
  publisher={Cambridge University Press}
}

@book{amparoseguratemme2007,
author = {Gil, Amparo and Segura, Javier and Temme, Nico M.},
title = {Numerical Methods for Special Functions},
publisher = {Society for Industrial and Applied Mathematics},
year = {2007}
}

@Article{Thalhammer2012,
  author     = {Thalhammer, Mechthild},
  title      = {Convergence analysis of high-order time-splitting pseudospectral methods for nonlinear {S}chr\"{o}dinger equations},
  journal    = {SIAM J. Numer. Anal.},
  year       = {2012},
  volume     = {50},
  number     = {6},
  pages      = {3231--3258},
  issn       = {0036-1429},
  doi        = {10.1137/120866373},
  fjournal   = {SIAM J. Numer. Anal.},
  mrclass    = {65L60 (65L70)},
  mrnumber   = {3022261},
  mrreviewer = {Rossana Vermiglio},
  url        = {https://doi.org/10.1137/120866373},
}

@article{pitaevskii1961vortex,
  title={Vortex lines in an imperfect {B}ose gas},
  author={Pitaevskii, Lev P},
  journal={Sov. Phys. JETP},
  volume={13},
  number={2},
  pages={451--454},
  year={1961}
}

@article{gross1961structure,
  title={Structure of a quantized vortex in boson systems},
  author={Gross, Eugene P},
  journal={Il Nuovo Cimento (1955-1965)},
  volume={20},
  number={3},
  pages={454--477},
  year={1961},
  publisher={Springer}
}

@article{bao2003numerical,
  title={{Numerical solution of the {G}ross--{P}itaevskii equation for Bose--Einstein condensation}},
  author={Bao, Weizhu and Jaksch, Dieter and Markowich, Peter A},
  journal={J. Comput. Phys.},
  volume={187},
  number={1},
  pages={318--342},
  year={2003},
  publisher={Elsevier}
}

@Article{ThCaNe2009,
  author     = {Thalhammer, Mechthild and Caliari, Marco and Neuhauser, Christof},
  title      = {High-order time-splitting {H}ermite and {F}ourier spectral methods},
  journal    = {J. Comput. Phys.},
  year       = {2009},
  volume     = {228},
  number     = {3},
  pages      = {822--832},
  issn       = {0021-9991},
  doi        = {10.1016/j.jcp.2008.10.008},
  fjournal   = {Journal of Computational Physics},
  mrclass    = {65M70},
  mrnumber   = {2477790},
  url        = {https://doi.org/10.1016/j.jcp.2008.10.008},
}

@article{bao2009generalized,
  title={{A generalized-Laguerre--Fourier--Hermite pseudospectral method for computing the dynamics of rotating Bose--Einstein condensates}},
  author={Bao, Weizhu and Li, Hailiang and Shen, Jie},
  journal={SIAM Journal on Scientific Computing},
  volume={31},
  number={5},
  pages={3685--3711},
  year={2009},
  publisher={SIAM}
}

@article{bao2005fourth,
  title={{A fourth-order time-splitting Laguerre--Hermite pseudospectral method for Bose--Einstein condensates}},
  author={Bao, Weizhu and Shen, Jie},
  journal={SIAM Journal on Scientific Computing},
  volume={26},
  number={6},
  pages={2010--2028},
  year={2005},
  publisher={SIAM}
}

@article{gauckler2011convergence,
  title={{Convergence of a split-step Hermite method for the Gross--Pitaevskii equation}},
  author={Gauckler, Ludwig},
  journal={IMA J. Numer. Anal.},
  volume={31},
  number={2},
  pages={396--415},
  year={2011},
  publisher={OUP}
}

@book{griffiths2018introduction,
  title={Introduction to quantum mechanics},
  author={Griffiths, David J and Schroeter, Darrell F},
  year={2018},
  edition={third},
  publisher={Cambridge University Press}
}

@article{walter1977properties,
  title={{Properties of Hermite series estimation of probability density}},
  author={Walter, Gilbert G},
  journal={Ann. Stat.},
  pages={1258--1264},
  year={1977},
  publisher={JSTOR}
}

@article{carlesmaierhofer25,
  title={{On scattering for NLS: rigidity properties and numerical simulations via the lens transform}},
  author={Carles, R{\'e}mi and Maierhofer, Georg},
  journal={Mathematical Models and Methods in Applied Sciences},
  year={2026},
  publisher={World Scientific}
}

@article{banicamaierhoferschratz26,
    title={Computing nonlinear {S}chr\"odinger equations with Hermite functions beyond harmonic traps},
    author={Banica, Valeria and Maierhofer, Georg and Schratz, Katharina},
    journal={arXiv preprint arXiv:2512.20840},
    year={2025}
}

@article{izenman1991review,
  title={Review papers: Recent developments in nonparametric density estimation},
  author={Izenman, Alan Julian},
  journal={J. Am. Stat. Assoc.},
  volume={86},
  number={413},
  pages={205--224},
  year={1991},
  publisher={Taylor \& Francis}
}

@book{szego1939orthogonal,
  title={Orthogonal polynomials},
  author={Szeg\H{o}, Gabor},
  volume={23},
  year={1939},
  publisher={AMS}
}

@misc{NIST:DLMF,
         key = "{\relax DLMF}",
       title = "{\it NIST Digital Library of Mathematical Functions}",
howpublished = "\url{https://dlmf.nist.gov/}, Release 1.2.6 of 2026-03-15",
         url = "https://dlmf.nist.gov/",
        note = "F.~W.~J. Olver, A.~B. {Olde Daalhuis}, D.~W. Lozier, B.~I. Schneider,
                R.~F. Boisvert, C.~W. Clark, B.~R. Miller, B.~V. Saunders,
                H.~S. Cohl, and M.~A. McClain, eds."}

@article{clenshaw1955note,
  title={A note on the summation of {C}hebyshev series},
  author={Clenshaw, Charles W},
  journal={Math. Comput.},
  volume={9},
  number={51},
  pages={118--120},
  year={1955}
}

@article{leibon2008fast,
  title={A fast {H}ermite transform},
  author={Leibon, Gregory and Rockmore, Daniel N and Park, Wooram and Taintor, Robert and Chirikjian, Gregory S},
  journal={Theor. Comput. Sci.},
  volume={409},
  number={2},
  pages={211--228},
  year={2008},
  publisher={Elsevier}
}

@article{townsend2018fast,
  title={Fast polynomial transforms based on {T}oeplitz and {H}ankel matrices},
  author={Townsend, Alex and Webb, Marcus and Olver, Sheehan},
  journal={Math. Comput.},
  volume={87},
  number={312},
  pages={1913--1934},
  year={2018}
}

@article{slevinsky2018use,
  title={On the use of {H}ahn’s asymptotic formula and stabilized recurrence for a fast, simple and stable {C}hebyshev--{J}acobi transform},
  author={Slevinsky, Richard Mika{\"e}l},
  journal={IMA J. Numer. Anal.},
  volume={38},
  number={1},
  pages={102--124},
  year={2018},
  publisher={Oxford University Press}
}

@article{jin2011mathematical,
  title={Mathematical and computational methods for semiclassical {S}chr{\"o}dinger equations},
  author={Jin, Shi and Markowich, Peter and Sparber, Christof},
  journal={Acta Numer.},
  volume={20},
  pages={121--209},
  year={2011},
  publisher={Cambridge University Press}
}

@article{demmel2008performance,
  title={Performance and accuracy of {LAPACK}'s symmetric tridiagonal eigensolvers},
  author={Demmel, James W and Marques, Osni A and Parlett, Beresford N and V{\"o}mel, Christof},
  journal={SIAM J. Sci. Comput.},
  volume={30},
  number={3},
  pages={1508--1526},
  year={2008},
  publisher={SIAM}
}

@article{dhillon2005glued,
  title={Glued matrices and the {MRRR} algorithm},
  author={Dhillon, Inderjit S and Parlett, Beresford N and V{\"o}mel, Christof},
  journal={SIAM J. Sci. Comput.},
  volume={27},
  number={2},
  pages={496--510},
  year={2005},
  publisher={SIAM}
}

\begin{appendix}
\newpage
\section{MATLAB Code}\label{app:matlab_code}\vspace{-0.4cm}
\begin{figure}[!ht]
\begin{lstlisting}[language=matlab, numbers=none]
function [d, Q, x] = initialise_Hermite_transform_Golub_Welsch(N)
%INITIALISE_HERMITE_TRANSFORM_GOLUB_WELSCH
% Builds the orthogonal matrix Q and weight vector d such that
% the coeffs2vals transform is d .* (Q' * cfs) and
% the val2coeffs transform is Q * (vals ./ d).
% See Webb--Maierhofer 2026.

J = diag(sqrt(.5:.5:(N-1)/2), 1) + diag(sqrt(.5:.5:(N-1)/2), -1);
[Q, x] = eig(J, 'vector');                % see Golub-Welsch 1969
Q = Q .* sign(Q(N,:)) .* (-1).^(N+1:2*N); % enforce signs of final row

d = sqrt(N) * abs(herm_func(N{-}1, x(floor(N/2)+1:end)));
d = [d(end:-1:1); d((1+mod(N,2)):end)];
end

function val = herm_func(N, x)
% Evaluates the degree-N Hermite function (for x in [0, sqrt(2N+1)])
if N <= 200 % use Clenshaw's algorithm
    val = exp(-x.^2/2) / pi^(1/4);
    val1 = zeros(size(x));
    for k = N:-1:1
        val2 = val1; val1 = val;
        val = x .* val1 * sqrt(2/k) - val2 / sqrt(1 + 1/k);
    end
else % use Airy asymptotics, DLMF (12.10.35), accurate for N > 200
    mu2 = 2*N+1; t = x/sqrt(mu2);      % 12.10.1
    theta = acos(t); t2 = t.^2;
    eta = (theta - t.*sqrt(1-t2))/2;   % 12.10.23
    zeta = -(3*eta/2).^(2/3);          % 12.10.39
    phi = (zeta./(t2-1)).^(1/4);       % 12.10.40

    % 12.10.43:
    a1 = 15/144; b1 = -7/5*a1;
    a2 = 5*7*9*11/2/144^2; b2 = -13/11*a2;
    a3 = 7*9*11*13*15*17/6/144^3;

    % 12.10.9:
    u1 = (t2-6).*t/24; u2 = ((-9*t2 + 249).*t2 + 145)/1152;
    u3 = ((((-4042*t2+18189).*t2-28287).*t2-151995).*t2-259290).*t/414720;

    % 12.10.42:
    phi6 = phi.^6; A0 = 1; B0 = -(phi6.*u1+a1)./zeta.^2;
    A1 = ((phi6.*u2 + b1*u1).*phi6 + b2)./zeta.^3;
    B1 = -(((phi6.*u3 + a1*u2).*phi6 + a2*u1).*phi6 + a3)./zeta.^5;

    % 12.10.35:
    Airy0 = airy(mu2^(2/3)*zeta);
    Airy1 = airy(1, mu2^(2/3)*zeta);
    val = Airy0.*(A0+A1/mu2^2) + (Airy1/mu2^(4/3)).*(B0+B1/mu2^2);

    % 12.10.14:
    g = (((-4027/4976640/mu2 + 1003/103680)/mu2 + 1/576)/mu2 - 1/24)/mu2 + 1;
    g = g*exp(-gammaln(N+1)/2+(mu2/4-1/12)*log(mu2)-(mu2-3)*log(2)/4-mu2/4);
    val = (pi^(1/4) * g) * phi .* val;
end
end
\end{lstlisting}\vspace{-0.2cm}
\caption{MATLAB code of the novel algorithm described in section~\ref{sec:Golub-Welsch_method}.}
\end{figure}

\end{appendix}
\end{document}